\documentclass[12pt,a4paper]{amsart}

\usepackage{amssymb}
\usepackage{amsthm}
\usepackage{amsmath}
\usepackage{graphicx}
\usepackage{enumerate}
\usepackage{xcolor}
\usepackage{tikz}
\usepackage{graphbox}
\usepackage{orcidlink}
\usepackage{mathrsfs}  
\usepackage{hyperref}
\hypersetup{colorlinks=true,allcolors=blue}
\usepackage[margin=2.5cm]{geometry}
\usepackage{mathabx}

\newtheorem{theorem}{Theorem}[section]
\newtheorem{proposition}[theorem]{Proposition}
\newtheorem{corollary}[theorem]{Corollary}
\newtheorem{lemma}[theorem]{Lemma}

\theoremstyle{definition}
\newtheorem{example}[theorem]{Example}
\newtheorem{remark}[theorem]{Remark}

\newcommand{\kc}{\mathcal{C}}

\title[Independence vs. marginal independence]{How far are $d$-dimensional copulas 
with uniform $(d-1)$-marginals from (total) independence?}

\author[D. Kokol Bukovšek]{Damjana Kokol Bukovšek \orcidlink{0000-0002-0098-6784} }
\address{University of Ljubljana, School of Economics and Business, and Institute of Mathematics, Physics and Mechanics, Ljubljana, Slovenia}
\email{Damjana.Kokol.Bukovsek@ef.uni-lj.si}

\author[N. Stopar]{Nik Stopar \orcidlink{0000-0002-0004-4957} } 
\address{University of Ljubljana, Faculty of Civil and Geodetic Engineering, University of Ljubljana, Faculty of Mathematics and Physics, and Institute of Mathematics, Physics and Mechanics, Ljubljana, Slovenia}
\email{Nik.Stopar@fgg.uni-lj.si}

\author[W. Trutschnig]{Wolfgang Trutschnig \orcidlink{0000-0002-7131-1944} }
\address{University of Salzburg, Working group for Data Science, Statistics, Stochastics, Department for Artificial Intelligence and Human Interfaces, Salzburg, Austria}
\email{Wolfgang.Trutschnig@plus.ac.at}

\keywords{copula, independence, complete dependence}
\subjclass[2020]{62H05, 60E05}

\begin{document}

\begin{abstract}
We consider the family $\kc_d^{\Pi_{d-1}}$ of all $d$-dimensional copulas 
whose $(d-1)$-dimensional marginals are all equal to the $(d-1)$-dimensional 
product copula $\Pi_{d-1}$ and tackle the natural question, `how far away' from the 
$d$-dimensional product copula $\Pi_d$ elements in $\kc_d^{\Pi_{d-1}}$ can be.
We provide definitive answers for both, the uniform metric $d_\infty$ as well 
as the stronger, conditioning-based metric $D_1$. The established results clearly indicate that the family $\kc_d^{\Pi_{d-1}}$
is larger than one might expect.  
\end{abstract}

\maketitle

\section{Introduction}

In the paper \cite{NeUb12}, entitled `How close are pairwise and mutual independence?' 
the authors derived sharp pointwise bounds for the fa\-mily of 
all three-dimensional copulas with uniform bivariate marginals and provided 
an asymptotic result for the general $d$-dimensional setting with  $d\geq3$. 
Moving from bivariate uniform to $(d-1)$-dimensional uniform marginals, we here consider 
$d \geq 3$ and study the class
$\kc_d^{\Pi_{d-1}}$ of all $d$-dimensional copulas 
whose $(d-1)$-dimensional marginals all coincide with the \mbox{$(d-1)$}-dimensional 
product copula $\Pi_{d-1}$. 
The intuitive conjecture, formulated in \cite{NeUb12}, saying  
that pairwise independence should restrict variability/flexibility as well as 
the maximum possible distance to mutual (total) independence modelled by 
$\Pi_d$, naturally applies even more for the family $\kc_d^{\Pi_{d-1}}$. 
As our results show, however, $\kc_d^{\Pi_{d-1}}$ is much larger and versatile  
than one might expect -- 
regardless of whether we consider the standard uniform metric $d_\infty$ or 
the stronger metric $D_1$ (originally introduced in \cite{tru11} in the bivariate setting and generalized in \cite{GJT} to the multivariate case
to quantify the extent of dependence of a random variable $Y$ on a random vector $\mathbf{X}$).
As our main results we will prove that
$$
\sup\Big\{d_\infty(A,\Pi_d):\, A \in \kc_d^{\Pi_{d-1}}\Big\}=\tfrac{1}{2^d}
$$
as well as 
$$
\sup\Big\{D_1(A,\Pi_d):\, A \in \kc_d^{\Pi_{d-1}}\Big\}=\tfrac{1}{3}
$$
holds for every $d \geq 3$ and provide examples showing that both suprema are attained.

The remainder of this note is organized as follows: Section 2 gathers notation and 
preliminaries, Section 3 presents and proves the main results. Several 
examples and figures illustrate the underlying ideas. 

\section{Notation and Preliminaries}\label{sec:2}
Throughout the paper bold symbols will denote vectors. For every fixed 
index/coor\-dinate $i_0 \in [d] := \{1,\ldots,d\}$ and every $\mathbf{x} \in \mathbb{R}^d$ we will write
$\mathbf{x}_{-i_0}=(x_1,\ldots,x_{i_0-1},x_{i_0+1},\ldots,x_d)$. 
Moreover, for every vector $\mathbf{j}=(j_1,\ldots,j_l)$ of at most $l \leq d-1$ pairwise 
distinct indices in $[d]$ we will denote by $\pi_\mathbf{j}: [0,1]^d \rightarrow [0,1]^l$ 
the projection onto the coordinates in $\mathbf{j}$, i.e., 
$\pi_{\mathbf{j}}(x_1,\ldots,x_d)=(x_{j_1},\ldots,x_{j_l})$. 
To simplify notation we will also write $\pi_{1:l}$ for the projection onto the first $l$ coordinates
as well as $\pi_{-j_0}$ for the projection onto the coordinates $(1,\ldots,j_0-1,j_0+1,\ldots,d)$. 

As already mentioned before, $\kc_d$ will denote the class of all $d$-dimensional copulas 
for $d \geq 2$, and $\Pi_d$ the $d$-dimensional product copula. 
We will write $\mathcal{P}_{\mathcal{C}_d}$ for the class of all $d$-stochastic measures
(i.e., probability measures for which the corresponding distribution function restricted to $[0,1]^d$
is a $d$-dimensional copula). For every $A\in \kc_d$ the correspon\-ding $d$-stochastic measure 
will be denoted by $\mu_A \in \mathcal{P}_{\mathcal{C}_d}$; moreover, 
$\lambda_d=\mu_{\Pi_d}$ will denote the $d$-dimensional Lebesgue measure on $[0,1]^d$.   
For a random vector $\mathbf{X}=(X_1,\ldots,X_d)$ on a probability space 
$(\Omega,\mathcal{A},\mathbb{P})$ and $A \in \kc_d$ we will write 
$\mathbf{X}\sim A$ if, and only if, $A$ is the restriction of the distribution function of
$\mathbf{X}$ to $[0,1]^d$ (or, equivalently, if the distribution $\mathbb{P}^\mathbf{X}$ of $\mathbf{X}$ coincides with $\mu_A$). Notice that 
for $\mathbf{X}=(X_1,\ldots,X_d)\sim A$ we have that each $X_i$ is uniformly distributed on $[0,1]$. 

For every $A \in \kc_d$ and every vector $\mathbf{j}=(j_1,\ldots,j_l)$ of $l \le d-1$ distinct indices in $[d]$ we will let $A^\mathbf{j}$ denote the marginal 
copula corresponding to $\mathbf{j}$, i.e., the copula corresponding to the push-forward 
$\mu_{A^\mathbf{j}}=\mu_A^{\pi_{\mathbf{j}}}$ of $\mu_A$ via $\pi_{\mathbf{j}}$; 
$A^{1:l}$ denotes the marginal copula of the first $l$ coordinates and $A^{-j_0}$ the 
marginal copula of all coordinates except coordinate $j_0$.  
For further properties of copulas and $d$-stochastic measures we refer the reader to \cite{DuSe}, and \cite{Nels}. 

For a topological space $T$ we will denote the Borel $\sigma$-algebra on $T$ by $\mathcal{B}(T)$.
We call a map 
$K: \mathbb{R}^{d-1}\times\mathcal{B}(\mathbb{R}) \rightarrow [0,1]$ a
$(d-1)$-\emph{Markov kernel} from 
$\mathbb{R}^{d-1}$ to $\mathbb{R}$, if the function 
$\mathbf{x}\mapsto K(\mathbf{x},E)$ is 
$\mathcal{B}(\mathbb{R}^{d-1})$-$\mathcal{B}(\mathbb{R})$-measurable for every fixed 
$E\in\mathcal{B}(\mathbb{R})$ and the map $E\mapsto K(\mathbf{x},E)$ is a probability measure 
on $\mathcal{B}(\mathbb{R})$ for every $\mathbf{x}\in\mathbb{R}^{d-1}$. 
Given a random variable $Y$ and a $(d-1)$-dimensional random vector 
$\mathbf{X}$ on a joint probability space $(\Omega,\mathcal{A},\mathbb{P})$, a Markov kernel 
$K$ is called a regular conditional distribution of
$Y$ given $\mathbf{X}$, if (and only if) for every set 
$E \in \mathcal{B}(\mathbb{R})$ the identity 
$$
K(\mathbf{X}(\omega), E) = \mathbb{E}(\mathbf{1}_E \circ Y | \mathbf{X})(\omega)
$$
holds for $\mathbb{P}$-almost every $\omega \in \Omega$.
It is a well-known fact, see \cite{Kallenberg}, that for each pair $(\mathbf{X}, Y)$ 
such a regular conditional distribution $K$ of $Y$ given $\mathbf{X}$ exists and that it is 
unique for $\mathbb{P}^{\mathbf{X}}$-almost every $\mathbf{x} \in \mathbb{R}^{d-1}$.
For $(\mathbf{X}, Y) \sim A$ we will let 
$K_{A}:[0,1]^{d-1} \times \mathcal{B}([0,1]) \to [0,1]$ denote (a version of) the corresponding 
conditional distribution of $Y$ given $\mathbf{X}$ (i.e., we view 
$K_A$ directly as a mapping from $[0,1]^{d-1} \times \mathcal{B}([0,1])$ to 
$[0,1]$); $K_A$ will simply be referred to as 
(a version of) the $(d-1)$-\emph{Markov kernel} of the copula $A$. 

For every $G \subseteq [0,1]^d$ and $\mathbf{x} \in [0,1]^{d-1}$ 
define the $\mathbf{x}$-section $G_\mathbf{x}$ of $G$ by $G_{\mathbf{x}}:=\{y\in [0,1]: 
(\mathbf{x},y) \in G\}\in\mathcal{B}([0,1])$. 
Applying \emph{disintegration} of $\mu_A$ into the marginal $\mu_{A^{1:d-1}}$ and the 
$(d-1)$-Markov kernel $K_A$ of $A$,
 see \cite{Kallenberg}, Section 5, the following identity 
holds for all $G \in \mathcal{B}([0,1]^d)$:
\begin{align}\label{eq:DI}
	\mu_A(G) = \int_{\mathbb{I}^{d-1}} K_{A}(\mathbf{x},G_{\mathbf{x}})
	\, \mathrm{d}\mu_{A^{1:d-1}}(\mathbf{x}).
\end{align}
We say that a mapping $g: [0,1]^{d-1} \rightarrow [0,1]$ is $\mu_{A^{1:d-1}}$-$\lambda$ preserving, if $g$ is Borel measurable and fulfills 
that the push-forward $(\mu_{A^{1:d-1}})^g$ of $\mu_{A^{1:d-1}}$ via $g$ 
coincides with $\lambda$.
In accordance with \cite{GJT} we will call a copula $A \in \kc_d$ \emph{completely dependent} (on the first $(d-1)$ coordinates), if there exists some 
$\mu_{A^{1:d-1}}$-$\lambda$-preserving transformation $g$ such that
$K(\mathbf{x},F):=\mathbf{1}_F(g(\mathbf{x}))$ is a regular conditional distribution of $A$. 
For $(\mathbf{X},Y)\sim A$ we have that complete dependence of $A$ is equivalent to the
existence of some $\mu_{A^{1:d-1}}$-$\lambda$ preserving transformation $g$ such that
$Y=g(\mathbf{X})$ almost surely. 

In the sequel we will work with the family $\kc_d^{\Pi_{d-1}}$ of all $d$-dimensional 
copulas whose $(d-1)$-dimensional marginals are all equal to $\Pi_{d-1}$. 
Letting $d_\infty$ denote the uniform distance on $\kc_d^{\Pi_{d-1}}$ it is straightforward
to verify that $(\kc_d^{\Pi_{d-1}},d_\infty)$ is a convex compact set. 
Following \cite{GJT}, considering the so-called linkage family
$$
\hat{\kc}_d := \{A \in \kc_d: A^{1:d-1}=\Pi_{d-1}\} \supseteq \kc_d^{\Pi_{d-1}}
$$
and setting 
\begin{equation}
D_1(A,B)= \int_{[0,1]} \int_{[0,1]^{d-1}} \big\vert K_A(\mathbf{x},[0,y]) - K_B(\mathbf{x},[0,y]) \big\vert d\lambda_{d-1}(\mathbf{x})d\lambda(y)
\end{equation}
defines a metric on $\hat{\kc}_d$. The resulting metric space 
$(\hat{\kc}_d,D_1)$ is complete and separable, convergence w.r.t. $D_1$ implies 
convergence w.r.t. to $d_\infty$ but not vice versa. The metric $D_1$ was introduced for quantifying the extent of directed dependence (again see \cite{GJT} and the underlying bivariate construction in \cite{tru11}) and 
therefore (contrary to $d_\infty$) exhibits the property that 
$D_1(A,\Pi_d)$ is maximal if and only if
$A$ is completely dependent, and the maximum is given by $\frac{1}{3}$.  
Furthermore, the diameter of the metric space $(\hat{\kc}_d,D_1)$ is $\frac{1}{2}$. 

If $B = \displaystyle\bigtimes_{j \in [d]} [x_j,y_j] \subseteq [0,1]^d$ is a $d$-dimensional rectangle, its \emph{vertices} will be denoted by
$$\textup{ver}(B)=\displaystyle\bigtimes_{j \in [d]} \{x_j,y_j\}.$$
For every vertex $\mathbf{v} \in \textup{ver}(B)$ let $m(\mathbf{v})=|\{j\in[d] \colon v_j=x_j\}|$ and define the \emph{sign} of vertex $\mathbf{v}$ of rectangle $B$ by $\textup{sgn}_B(\mathbf{v})=(-1)^{m(\mathbf{v})}$. For every copula $A\in \kc_d$ we have
\begin{equation}\label{eq:mass.quasi}
 \mu_A(B) = \sum_{\mathbf{v} \in \textup{ver}(B)} \textup{sgn}_B(\mathbf{v})A(\mathbf{v}) \ge 0.  
\end{equation}
If $A$ is only a $d$-dimensional quasi-copula, then we will define $\mu_A(B)$ 
according to eq. (\ref{eq:mass.quasi}) -- in this case $\mu_A(B)$ may also be negative
and $\mu_A$ does not necessarily extend to a (signed) measure, see \cite{DFST16}.

\section{Main results}
Our main objective is to calculate the quantities
\begin{align}
\Delta_{d_\infty} &:= \sup\Big\{d_\infty(A,\Pi_d):\, A \in \kc_d^{\Pi_{d-1}}\Big\}, \\
\Delta_{D_1} &:= \sup\Big\{D_1(A,\Pi_d):\, A \in \kc_d^{\Pi_{d-1}}\Big\}, 
\end{align}
and to show that both suprema are attainable.
First we focus on the second one, and start with a simple lemma which is valid in the much 
more general setting of Haar measure on compact groups, see \cite{RuF}, and the proof of which 
we include for the sake of completeness. 
\begin{lemma}\label{lem:haar}
    Suppose that $X$ and $Y$ are independent random variables. 
    If $X$ is uniformly distributed on $[0,1]$,  
    then so is the random variable $Z$ given by 
    $$
    Z:= X +Y \,\, \textup{mod (1)}.
    $$
\end{lemma}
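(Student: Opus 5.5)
We condition on $Y$ and reduce everything to the translation invariance of Lebesgue measure on the circle $[0,1)$ with addition mod $1$. Fix $z\in[0,1)$; it suffices to show that $\mathbb{P}(Z\le z)=z$.

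By independence, the joint distribution of $(X,Y)$ is the product $\lambda\otimes\mathbb{P}^Y$. Fubini's theorem then gives
$$
\mathbb{P}(Z\le z)=\int_{\mathbb{R}} \lambda\big(\{x\in[0,1]:\, x+y \bmod 1 \le z\}\big)\, d\mathbb{P}^Y(y).
$$
Measurability of the set $\{(x,y): x+y \bmod 1\le z\}$ is clear, since $(x,y)\mapsto x+y-\lfloor x+y\rfloor$ is Borel.

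It therefore remains to prove the following claim: for every fixed $y\in\mathbb{R}$,
$$
\lambda\big(\{x\in[0,1]: x+y \bmod 1\le z\}\big)=z.
$$
Write $y=k+s$ with $k\in\mathbb{Z}$ and $s\in[0,1)$. Since $x+y\bmod 1 = x+s \bmod 1$, we may assume $y=s$. For $x\in[0,1)$, the map $x\mapsto x+s \bmod 1$ sends $[0,1-s)$ to $[s,1)$ by $x\mapsto x+s$. It sends $[1-s,1)$ to $[0,s)$ by $x\mapsto x+s-1$. Each piece is a translation, so the map is a bijection of $[0,1)$ that preserves Lebesgue measure. The preimage of $[0,z]$ is therefore a union of at most two intervals of total length $z$. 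Explicitly, the preimage is
$$
\big([0,1-s)\cap[-s,z-s]\big)\ \cup\ \big([1-s,1)\cap[1-s,1-s+z]\big),
$$
and its length can be checked separately in the cases $z\le s$ and $z>s$. The point $x=1$ has measure zero and can be ignored.

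Integrating the constant $z$ against $\mathbb{P}^Y$ then yields $\mathbb{P}(Z\le z)=z$ for all $z\in[0,1)$. This is the distribution function of the uniform law, since $Z$ takes values in $[0,1)$.

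The only mildly delicate step is the bookkeeping for the claim, that is, confirming that the two translated pieces together have measure exactly $z$. I will handle it by the case split $z\le s$ versus $z>s$.
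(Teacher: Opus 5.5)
Your proof is correct and follows essentially the same route as the paper: independence gives the product law $\lambda\otimes\mathbb{P}^Y$, Fubini reduces the claim to the fact that the rotation $x\mapsto (x+y)\bmod 1$ preserves $\lambda$, and integrating a constant against $\mathbb{P}^Y$ finishes the argument. The differences are cosmetic: you test only on the sets $[0,z]$ rather than on arbitrary Borel $E$, and you verify the measure preservation explicitly (including the reduction from $y\in\mathbb{R}$ to $s\in[0,1)$), where the paper simply calls it obvious.
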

\begin{proof}
    For every $a \in [0,1)$ define the shift $r_a: [0,1] \rightarrow [0,1]$ by 
    $r_a(x)=(x+a) \, \textup{mod (1)}$. Then obviously $r_a$ preserves the Lebesgue 
    measure $\lambda$, i.e., 
    the push-forward $\lambda^{r_a}$ of $\lambda$ via $r_a$ coincides with $\lambda$.
    Let $E \in \mathcal{B}([0,1])$ be arbitrary but fixed. Then using independence of 
    $X$ and $Y$, Fubini's theorem, change of coordinates, and $\lambda^{r_y}=\lambda$ 
    (in this order) yields 
    \begin{align*}
    \mathbb{P}(Z &\in E) = \int_\Omega \mathbf{1}_E(Z) d\mathbb{P}  = 
                \int_{[0,1] \times \mathbb{R}} \mathbf{1}_{E}(r_y(x)) d \mathbb{P}^{X,Y}(x,y)\\
                &=\int_{[0,1]} \int_{\mathbb{R}} \mathbf{1}_{E}(r_y(x)) d \mathbb{P}^Y(y) 
                         d\mathbb{P}^X(x) = 
                  \int_{\mathbb{R}} \int_{[0,1]} \mathbf{1}_{E}(r_y(x)) d \mathbb{P}^X(x) 
                         d\mathbb{P}^Y(y) \\
                      &=  \int_{\mathbb{R}} \int_{[0,1]} \mathbf{1}_{E}(r_y(x)) 
                         d\lambda(x) d \mathbb{P}^Y(y)  = 
                         \int_{\mathbb{R}} \int_{[0,1]} \mathbf{1}_{E}(v) 
                         d\lambda^{r_y}(v) d \mathbb{P}^Y(y)\\ 
                      &= \int_{\mathbb{R}} \lambda^{r_y}(E)d\mathbb{P}^Y(y) 
                      = \int_{\mathbb{R}} \lambda(E)d\mathbb{P}^Y(y) = \lambda(E).    
    \end{align*}
   This shows that $\mathbb{P}^Z$ coincides with $\lambda$ on $[0,1]$ and the proof is complete. 
\end{proof}
Taking into account that the diameter of $(\hat{\kc}_d,D_1)$ is $\frac{1}{2}$ 
the following theorem shows that $\kc_d^{\Pi_{d-1}} \subseteq \hat{\kc}_d$ contains elements that 
are surprisingly `far away' from $\Pi_d$, in fact, as far away as any element of $\hat{\kc}_d$ can be (with respect to $D_1$).
\begin{theorem}\label{thm:mainD1}
For every $d \geq 3$ we have $\Delta_{D_1}=\frac{1}{3}$ and the supremum is attained.
\end{theorem}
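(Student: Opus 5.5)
The upper bound is immediate from the preliminaries. Since $\kc_d^{\Pi_{d-1}} \subseteq \hat{\kc}_d$, and $D_1(A,\Pi_d)\le \frac{1}{3}$ holds for every $A\in\hat{\kc}_d$ (with equality if and only if $A$ is completely dependent), we get $\Delta_{D_1}\le\frac13$. It therefore suffices to exhibit one completely dependent copula $A\in\kc_d^{\Pi_{d-1}}$. The supremum is then attained and equals $\frac13$.

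For the construction I would let $X_1,\ldots,X_{d-1}$ be independent and uniformly distributed on $[0,1]$, and set
$$
Y := X_1+X_2+\cdots+X_{d-1} \,\, \textup{mod (1)}.
$$
Take $A$ to be the copula of $(X_1,\ldots,X_{d-1},Y)$, whose distribution is supported in $[0,1]^d$. The required properties then follow in order.

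First, $Y$ is uniform on $[0,1]$: apply Lemma \ref{lem:haar} with $X=X_1$ and the independent variable $X_2+\cdots+X_{d-1}$. Hence $A$ is a copula.

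Second, $A^{1:d-1}=\Pi_{d-1}$ holds by construction.

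Third, consider any other $(d-1)$-marginal, obtained by dropping some $X_{i_0}$ with $i_0\le d-1$. I need the vector $(X_j)_{j\ne i_0}$ together with $Y$ to be independent with uniform components. Condition on $(X_j)_{j\neq i_0}=\mathbf{x}$, and write $s$ for the sum of these coordinates. Then $Y=r_{s}(X_{i_0})$, and $X_{i_0}$ is independent of the conditioning variables and uniform. Since the shift $r_s$ preserves $\lambda$ (as in the proof of Lemma \ref{lem:haar}), the conditional law of $Y$ is $\lambda$ for every $\mathbf{x}$. Therefore $Y$ is independent of $(X_j)_{j\ne i_0}$, and the joint law is $\lambda_{d-1}$. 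Formally, for Borel sets $E\subseteq[0,1]^{d-2}$ and $F\subseteq[0,1]$, Fubini gives $\mathbb{P}\big((X_j)_{j\ne i_0}\in E,\,Y\in F\big)=\lambda_{d-2}(E)\lambda(F)$. So every $(d-1)$-marginal equals $\Pi_{d-1}$, i.e.\ $A\in\kc_d^{\Pi_{d-1}}$. This step uses $d\ge3$, which guarantees that at least one $X_j$ remains besides the dropped one, so that $s$ is a nontrivial shift. The argument would also work for $d=2$, but that case is excluded anyway.

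Finally, $A$ is completely dependent. Here $g(\mathbf{x})=\sum_i x_i \,\textup{mod (1)}$ is Borel and $\lambda_{d-1}$-$\lambda$ preserving by the first step. Moreover $Y=g(\mathbf{X})$ holds surely, so $K_A(\mathbf{x},F)=\mathbf{1}_F(g(\mathbf{x}))$ is a regular conditional distribution. By the characterization quoted from \cite{GJT}, $D_1(A,\Pi_d)=\frac13$.

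The only step with real content is the verification of the marginals in the third step, and even that is a conditional version of Lemma \ref{lem:haar}. One could also verify $\frac13$ directly. Since $K_{\Pi_d}(\mathbf{x},[0,y])=y$, we have
$$
D_1(A,\Pi_d)=\int_{[0,1]}\int_{[0,1]^{d-1}}\big|\mathbf{1}_{[0,y]}(g(\mathbf{x}))-y\big|\,d\lambda_{d-1}(\mathbf{x})\,d\lambda(y).
$$
Because $g(\mathbf{X})$ is uniform, the inner integral equals $y(1-y)+(1-y)y=2y(1-y)$, and integrating over $y$ gives $\frac13$. This confirms the value without relying on the quoted characterization.
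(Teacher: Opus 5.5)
Your proposal is correct and follows essentially the same route as the paper: the construction $X_d=(X_1+\cdots+X_{d-1}) \bmod 1$, Lemma~\ref{lem:haar} for the uniformity of the last coordinate, a conditioning/Fubini argument with the measure-preserving shift $r_s$ to show that every $(d-1)$-marginal is $\Pi_{d-1}$, and the quoted characterization of $D_1$ for completely dependent copulas. Your direct computation $\int_0^1 2y(1-y)\,dy=\frac13$ is a useful independent check of the attained value, although the upper bound $\Delta_{D_1}\le\frac13$ still rests on the quoted property of $D_1$, exactly as in the paper.
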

\begin{proof}
    Considering the afore-mentioned properties of $D_1$, it suffices to show that the class 
    $ \kc_d^{\Pi_{d-1}}$ contains completely 
    dependent copulas, which can be done as follows.
    Let $X_1,\ldots,X_{d-1}$ denote independent, uniformly $[0,1]$-distributed random variables
    and set 
    \begin{equation}\label{eq:defX_q}
        X_d:= (X_1 + X_2 + \ldots + X_{d-1})  \,\, \textup{mod (1)}. 
    \end{equation}
    Then, as a direct consequence of Lemma \ref{lem:haar}, $X_d$ is uniformly distributed on 
    $[0,1]$ too. Letting $A$ denote the copula
    of the vector $\mathbf{X}=(X_1,\ldots,X_d)$ we obviously have that 
    $A \in \hat{\kc}_d$ and that $A$ is completely dependent. In fact, the vector 
    $\mathbf{X}$ even fulfills that each coordinate $X_j$ is a function of the other $(d-1)$ coordinates, i.e., a function of $\mathbf{X}_{-j}$. In other words, $A$ is completely 
    dependent in each direction.     
    To complete the proof it suffices to show that all $(d-1)$-dimensional marginals 
    of $A$ coincide with $\Pi_{d-1}$, which can be done as follows. 
    Considering disintegration (w.r.t. the coordinates $2,\ldots,d-1$) and again using  $\lambda^{r_a}=\lambda$,  
    for arbitrary $E_2,E_3,\ldots, E_d \in \mathcal{B}([0,1])$ we get
    \begin{align*}
        \mathbb{P}^{\mathbf{X}_{-1}} \Bigg(\bigtimes_{i=2}^d &E_i\Bigg) = 
        \mathbb{P}^{\mathbf{X}} \left([0,1] \times \bigtimes_{i=2}^d E_i\right) \\
        &= 
        \int_{\bigtimes_{i=2}^{d-1}E_i} \mathbb{P}\left(X_1 \in [0,1], r_{\sum_{i=2}^{d-1}x_i \,\, \textup{mod (1)}}(X_1) \in E_d \right)d\lambda_{d-2}(\mathbf{x}_{2:d-1}) \\
        &= \int_{\bigtimes_{i=2}^{d-1}E_i} \mathbb{P}\left(r_{\sum_{i=2}^{d-1}x_i \,\, \textup{mod (1)}}(X_1) \in E_d \right)d\lambda_{d-2}(\mathbf{x}_{2:d-1}) \\
        &= \int_{\bigtimes_{i=2}^{d-1}E_i} \lambda(E_d) \,d\lambda_{d-2}(\mathbf{x}_{2:d-1}) 
        = \prod_{i=2}^d \lambda(E_i) = \prod_{i=2}^d \mathbb{P}^{X_i}(E_i). 
        \end{align*}
    This shows $\mathbf{X}_{-1} \sim \Pi_{d-1}$. Replacing $j=1$ by any other index
    $j \in \{2,\ldots,d-1\}$ and proceeding analogously yields $\mathbf{X}_{-j} \sim \Pi_{d-1}$. 
    Since $\mathbf{X}_{-d} \sim \Pi_{d-1}$ by construction, the proof is complete. The left panel of Figure \ref{fig1} depicts a sample of the copula $A$ for 
    the case $d=3$. 
    \end{proof}
Our proof of Theorem \ref{thm:mainD1} establishes a slightly stronger result -- before
stating it, we fix some notation: $\Sigma_d$ denotes the set of all $d!$ permutations of 
$[d] = \{1,\ldots,d\}$; for every permutation $\varepsilon \in \Sigma_d$ and 
$A \in \kc_d^{\Pi_{d-1}}$ we let $A^\varepsilon$ denote 
the copula obtained from $A$ by permuting the coordinates according to $\varepsilon$, i.e., 
$A^\varepsilon$ is the copula of the vector $\mathbf{X}_{\varepsilon(\mathbf{i})}$ where
$\mathbf{X} \sim A$. 
\begin{corollary}
For every $d \geq 3$ we have 
$$
\sup\Big\{\min\{D_1(A^\varepsilon,\Pi_d): \, \varepsilon \in \Sigma_d\}: \, A \in \kc_d^{\Pi_{d-1}}\Big\} = \tfrac{1}{3}
$$
and the supremum is attained.
\end{corollary}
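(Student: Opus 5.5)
The plan is to reuse the copula $A$ constructed in the proof of Theorem \ref{thm:mainD1}, i.e.\ the copula of $\mathbf{X}=(X_1,\ldots,X_d)$ with $X_1,\ldots,X_{d-1}$ independent and uniform and $X_d=(X_1+\cdots+X_{d-1})$ mod (1). First we note the easy upper bound. Every $A^\varepsilon$ with $A\in\kc_d^{\Pi_{d-1}}$ again lies in $\kc_d^{\Pi_{d-1}}\subseteq\hat{\kc}_d$, since permuting coordinates maps the family of $(d-1)$-dimensional marginals onto itself. Hence $D_1(A^\varepsilon,\Pi_d)\le \frac13$ by the maximality property of $D_1$, and the supremum of the minima is at most $\frac13$. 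It therefore suffices to exhibit one $A\in\kc_d^{\Pi_{d-1}}$ with $D_1(A^\varepsilon,\Pi_d)=\frac13$ for every $\varepsilon\in\Sigma_d$.

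For the construction above, membership $A\in\kc_d^{\Pi_{d-1}}$ was already established in the proof of Theorem \ref{thm:mainD1}. Fix $\varepsilon\in\Sigma_d$. Then $A^\varepsilon$ is the copula of the permuted vector, whose last coordinate is $X_j$ for some $j\in[d]$, while its first $d-1$ coordinates are $\mathbf{X}_{-j}$ in some order. As observed in that proof, $X_j$ is a function of $\mathbf{X}_{-j}$. Explicitly, $X_d=(\sum_{i<d}X_i)$ mod (1), and for $j<d$ we have $X_j=(X_d-\sum_{i\ne j,d}X_i)$ mod (1) almost surely. (This identity holds except on the null set where $X_j=1$ is represented as $0$; such a set is irrelevant for the Markov kernel.)

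So the last coordinate of $A^\varepsilon$ equals $g(\text{first } d-1 \text{ coordinates})$ almost surely for a measurable $g$. Because the first $d-1$ coordinates are distributed according to $\Pi_{d-1}$ and the last is uniform, $g$ is $\lambda_{d-1}$-$\lambda$ preserving. Thus $A^\varepsilon$ is completely dependent in the sense of \cite{GJT}, and $D_1(A^\varepsilon,\Pi_d)=\frac13$.

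Taking the minimum over $\varepsilon$ then gives exactly $\frac13$, so the supremum is attained at this $A$. The only step needing care is the verification that $g$ is measurable and measure-preserving and that the almost-sure identity survives the mod (1) representation at the boundary point $1$. Both are routine: $g$ is a composition of continuous maps with the Borel map $t\mapsto t$ mod (1), and preservation of the measure follows from the uniformity of $X_j$ together with $\mathbf{X}_{-j}\sim\Pi_{d-1}$.
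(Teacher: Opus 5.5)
Your proof is correct and follows the paper's own route. The paper obtains the corollary directly from the proof of Theorem~\ref{thm:mainD1}: the copula of $(X_1,\ldots,X_{d-1},(X_1+\cdots+X_{d-1}) \bmod 1)$ has all $(d-1)$-marginals equal to $\Pi_{d-1}$ and each coordinate is a function of the others, so every $A^\varepsilon$ is completely dependent and attains the maximal value $\tfrac13$. Your explicit inversion formula for $X_j$ and your upper bound via $A^\varepsilon\in\kc_d^{\Pi_{d-1}}\subseteq\hat{\kc}_d$ just spell out what the paper leaves implicit.
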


\begin{figure}[ht]
\begin{center}   
  \includegraphics[width=0.48\textwidth]{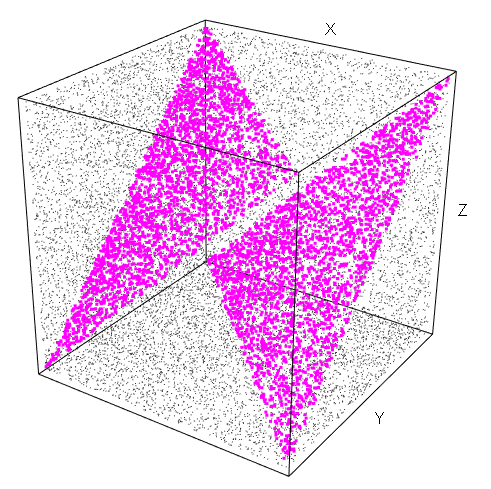}
  \includegraphics[width=0.48\textwidth]{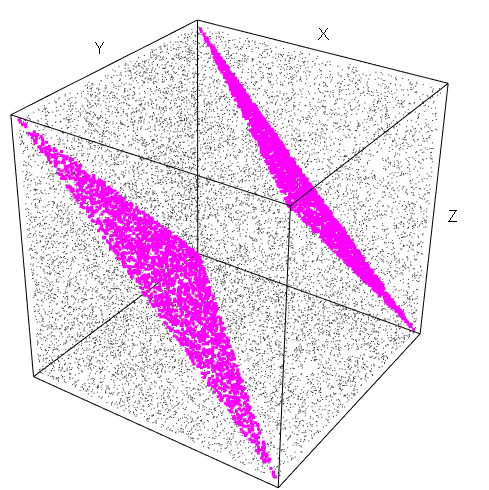}
\end{center}
\caption{Left panel: Sample of size $n=5.000$ of the copula $A \in \kc_3^{\Pi_2}$ (in dimension $d=3$) as 
    considered in the proof of Theorem 
    \ref{thm:mainD1} in magenta, bivariate marginal samples in gray (i.e., the 
    three-dimensional magenta points are projected onto the hyperplanes $h_1: z=0$,
    $h_2: y=1$, $h_3: x=0$). \\
    Right panel: Sample of the symmetric copula $B\in \kc_3^{\Pi_2}$ considered in the second 
    part of Remark \ref{rem:symm}.}
    \label{fig1}
\end{figure}

\begin{remark}\label{rem:symm}
    Notice that fixing $a \in (0,1)$, slightly changing the definition 
    of $X_d$ according to eq. (\ref{eq:defX_q}) to 
    \begin{equation*}
        X_d:= (X_1 + X_2 + \ldots + X_{d-1} + a)  \,\, \textup{mod (1)}, 
    \end{equation*}
    and proceeding like in the proof of Theorem \ref{thm:mainD1}
    results in another copula $A_a \in \kc_d^{\Pi_{d-1}}$ 
    which is completely dependent and which also fulfills 
    $D_1(A_a,\Pi_d)=\frac{1}{3}=\Delta_{D_1}$. 
    In other words, we can easily construct uncountably many elements in 
    $\kc_d^{\Pi_{d-1}}$ having $D_1$-distance $\frac{1}{3}$ to $\Pi_d$.
    Moreover, our construction can easily be modified in order to obtain symmetric 
    (exchangeable) copulas with the afore-mentioned maximization property
    w.r.t.~$D_1$. 
    In fact, replacing (\ref{eq:defX_q}) by 
    \begin{equation*}
        X_d:= 1 - \big((X_1 + X_2 + \ldots + X_{d-1})  \,\, \textup{mod (1)}\big), 
    \end{equation*}
    we obviously have $(\sum_{i=1}^d X_i) \,\,\textup{mod (1)} =0$ almost surely and for every 
    $j \in [d]$ the identity 
    $$
     X_j = 1 - \left(\left(\sum_{i\neq j} X_i\right)  \,\, \textup{mod (1)}\right)
    $$
    holds almost surely. As a direct consequence, the copula $B$ of $\mathbf{X}=(X_1,\ldots,X_d)$ is
    exchangeable and fulfills $D_1(B,\Pi_d)=\frac{1}{3}=\Delta_{D_1}$. 
    The right panel of Figure \ref{fig1} depicts a sample (plus bivariate marginal samples) 
    of the copula $B$ for $d=3$.
\end{remark}

We now turn our attention to $\Delta_{d_\infty}$ and first derive a pointwise lower and upper bound for any copula $A \in \kc_d^{\Pi_{d-1}}$. Such bounds have already been considered in \cite{Deh83}, but the bounds derived 
in Proposition~\ref{prop:dinf} are stronger (they are, in fact, best possible). 
Furthermore, the book \cite{Joe97} discusses bounds for $d$-variate distribution functions with given $(d-1)$-variate marginals in the case $d=3,4$. In case the marginals are uniform, the formulas simplify to the bounds in Proposition~\ref{prop:dinf}.

\begin{proposition}\label{prop:dinf}
    Let $d \ge 2$, $A \in \kc_d^{\Pi_{d-1}}$ and $\mathbf{u} \in [0,1]^d$. Then
    $$ S_d(\mathbf{u}) \le A(\mathbf{u}) \le T_d(\mathbf{u}) ,$$
    where
    \begin{align*}
        S_d(\mathbf{u}) &= \Pi_d(\mathbf{u})-\min_{\substack{J \subseteq [d]\\ |J| \textup{ even}}} \Big\{\prod_{j \in J} (1-u_j) \prod_{j \in [d] \setminus J} u_j\Big\}, \text{ and}\\
        T_d(\mathbf{u}) &= \Pi_d(\mathbf{u})+\min_{\substack{J \subseteq [d]\\ |J| \textup{ odd}}} \Big\{\prod_{j \in J} (1-u_j) \prod_{j \in [d] \setminus J} u_j\Big\}.
    \end{align*}
    Moreover, for every $\mathbf{u} \in [0,1]^d$ there exist copulas $A_1,A_2 \in \kc_d^{\Pi_{d-1}}$ such that $A_1(\mathbf{u}) = S_d(\mathbf{u})$  and $A_2(\mathbf{u}) = T_d(\mathbf{u})$.
\end{proposition}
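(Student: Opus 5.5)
Fix $\mathbf{u}\in[0,1]^d$ and consider the rectangle decomposition of $[0,1]^d$ induced by $\mathbf{u}$. For each $J\subseteq[d]$ let
$$R_J=\prod_{j\in J}[u_j,1]\times\prod_{j\notin J}[0,u_j].$$
Then $\lambda_d(R_J)=\prod_{j\in J}(1-u_j)\prod_{j\notin J}u_j$. The key observation is an inclusion--exclusion identity. For every $J$, the mass $\mu_A(R_J)$ is determined by $A(\mathbf{u})$ together with values of $A$ at points having at least one coordinate equal to $1$. Indeed, write $\mu_A(R_J)$ via eq.~(\ref{eq:mass.quasi}) using vertices with coordinates $u_j$ or $1$ for $j\in J$, and $0$ or $u_j$ for $j\notin J$. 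Vertices with a zero coordinate contribute nothing. The only remaining vertex with no coordinate equal to $1$ is $\mathbf{u}$ itself, and its sign is $(-1)^{|J|}$. All other vertices have some coordinate equal to $1$, so $A$ evaluated there is a lower-dimensional marginal of $A$, hence equals the product of the remaining coordinates, exactly as for $\Pi_d$. Applying the same expansion to $\Pi_d$ and subtracting gives
$$\mu_A(R_J)-\lambda_d(R_J)=(-1)^{|J|}\big(A(\mathbf{u})-\Pi_d(\mathbf{u})\big).$$

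The bounds follow from $\mu_A(R_J)\ge 0$. For $|J|$ even this gives $A(\mathbf{u})-\Pi_d(\mathbf{u})\ge-\lambda_d(R_J)$. For $|J|$ odd it gives $A(\mathbf{u})-\Pi_d(\mathbf{u})\le\lambda_d(R_J)$. Taking the minimum over $J$ yields $S_d\le A\le T_d$. This requires $d\ge2$ only so that the $(d-1)$-marginals make sense; note that even and odd subsets both exist.

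For attainability I build $A_1$ and $A_2$ from the $2^d$ boxes $R_J$. Let $\delta\in\mathbb{R}$ and define $\mu$ on each $R_J$ as Lebesgue measure scaled by the factor
$$1+(-1)^{|J|}\delta/\lambda_d(R_J),$$
so that $\mu(R_J)=\lambda_d(R_J)+(-1)^{|J|}\delta$. Equivalently, $\mu$ has density $1+\delta\sum_J(-1)^{|J|}\mathbf{1}_{R_J}/\lambda_d(R_J)$. When some $u_j\in\{0,1\}$ the boxes degenerate, but then all the bounds coincide with $\Pi_d$ and $A=\Pi_d$ works, so assume $u_j\in(0,1)$ for all $j$. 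The perturbation has the product form
$$\delta\prod_{j=1}^d\Big(\frac{\mathbf{1}_{[0,u_j]}(x_j)}{u_j}-\frac{\mathbf{1}_{[u_j,1]}(x_j)}{1-u_j}\Big),$$
up to an overall sign. Each factor integrates to zero in its coordinate, so integrating out any single coordinate kills the perturbation. Hence every $(d-1)$-marginal of $\mu$ is $\lambda_{d-1}$, and in particular every univariate marginal is uniform.

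It remains to ensure $\mu$ is a probability measure, i.e.\ that the density is nonnegative. This holds exactly when $-\delta\ge-\min_{|J|\text{ even}}\lambda_d(R_J)$ on the even boxes and $\delta\ge-\min_{|J|\text{ odd}}\lambda_d(R_J)$ on the odd boxes (with the sign conventions matched to the expression above). Choosing $\delta$ at the extreme allowed value in each direction, and evaluating the resulting copula at $\mathbf{u}$, whose value depends only on $\mu(R_\emptyset)$ and the marginals, gives $A(\mathbf{u})=S_d(\mathbf{u})$ and $A(\mathbf{u})=T_d(\mathbf{u})$ respectively. The main point to check carefully is the sign bookkeeping in this last step, so that the extremal $\delta$ indeed matches the minima over even and odd $J$.
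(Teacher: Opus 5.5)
Your proof is correct and follows the paper's route. The bounds come from the same vertex expansion of $\mu_A(R_J)\ge 0$, and your piecewise-uniform measure on the $2^d$ boxes is exactly the multilinear extension of the paper's grid subcopula (with $\delta=A(\mathbf{u})-\Pi_d(\mathbf{u})$); your product-form perturbation gives a more self-contained check of the $(d-1)$-marginals than the paper's appeal to subcopula extension. One slip needs fixing: with your convention $\mu(R_J)=\lambda_d(R_J)+(-1)^{|J|}\delta$ (the product formula holds exactly, with no sign change), nonnegativity means $\delta\ge-\min_{|J|\text{ even}}\lambda_d(R_J)$ and $\delta\le\min_{|J|\text{ odd}}\lambda_d(R_J)$ --- you wrote these two conditions swapped --- and since $A(\mathbf{u})=\mu(R_\emptyset)=\Pi_d(\mathbf{u})+\delta$, the two endpoints give exactly $S_d(\mathbf{u})$ and $T_d(\mathbf{u})$.
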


\begin{proof}
For every subset $J \subseteq [d]$ and every index $j \in [d]$ we denote by $E_j^J$ the interval $[u_j,1]$ if $j \in J$ and the interval $[0, u_j]$ if $j \in [d] \setminus J$. Furthermore, let $B_J$ be the rectangle $B_J = \displaystyle\bigtimes_{j \in [d]} E_j^J$ and let 
$$\textup{ver}(B_J) \subseteq \Delta := \bigtimes_{j \in [d]} \{0, u_j, 1\}$$ 
be the set of vertices of $B_J$. Note that $\mathbf{u} \in \textup{ver}(B_J)$ and $\textup{sgn}_{B_J}(\mathbf{u}) = (-1)^{|J|}$. Since $A \in \kc_d^{\Pi_{d-1}}$, we have $A(\mathbf{v}) = \Pi_d(\mathbf{v})$ for every
$\mathbf{v} \in \textup{ver}(B_J)$, except for $\mathbf{v} = \mathbf{u}$. 
This yields
\begin{align*}
    0 \le \mu_A(B_J) &= \sum_{\mathbf{v} \in \textup{ver}(B_J)} \textup{sgn}_{B_J}(\mathbf{v})A(\mathbf{v}) \\
     &= \sum_{\mathbf{v} \in \textup{ver}(B_J)} \textup{sgn}_{B_J}(\mathbf{v})\Pi_d(\mathbf{v}) - \textup{sgn}_{B_J}(\mathbf{u})\Pi_d(\mathbf{u}) + \textup{sgn}_{B_J}(\mathbf{u})A(\mathbf{u}) \\
     &= \mu_{\Pi_d}(B_J) - (-1)^{|J|}\Pi_d(\mathbf{u}) + (-1)^{|J|}A(\mathbf{u}) \\
     &= \prod_{j \in J} (1-u_j) \prod_{j \in [d] \setminus J} u_j - (-1)^{|J|}\Pi_d(\mathbf{u}) + (-1)^{|J|}A(\mathbf{u}).
\end{align*}
If $|J|$ is odd, it follows that
$$A(\mathbf{u}) \le \Pi_d(\mathbf{u}) + \prod_{j \in J} (1-u_j) \prod_{j \in [d] \setminus J} u_j,$$
if $|J|$ is even, we obtain
$$A(\mathbf{u}) \ge \Pi_d(\mathbf{u}) - \prod_{j \in J} (1-u_j) \prod_{j \in [d] \setminus J} u_j.$$
Collecting all obtained inequalities, we infer
$$A(\mathbf{u}) \le \min_{\substack{J \subseteq [d]\\ |J| \textup{ odd}}} \Big\{\Pi_d(\mathbf{u}) + \prod_{j \in J} (1-u_j) \prod_{j \in [d] \setminus J} u_j\Big\} = T_d(\mathbf{u}),$$
as well as 
$$A(\mathbf{u}) \ge \max_{\substack{J \subseteq [d]\\ |J| \textup{ even}}} \Big\{\Pi_d(\mathbf{u}) - \prod_{j \in J} (1-u_j) \prod_{j \in [d] \setminus J} u_j\Big\} = S_d(\mathbf{u}).$$
It remains to show that the bounds are attained. Let $\mathbf{u} \in [0,1]^d$ be arbitrary but fixed and define a function $\widetilde{A}_1 \colon \Delta \to [0,1]$ by 
$$\widetilde{A}_1(\mathbf{v}) = \begin{cases} S_d(\mathbf{u}); & \text{if } \mathbf{v} = \mathbf{u}, \\
\Pi_d(\mathbf{v}); & \text{otherwise.} 
\end{cases}$$
For every subset $J \subseteq [d]$ we have
\begin{align*}
\sum_{\mathbf{v} \in \textup{ver}(B_J)} \textup{sgn}_{B_J}(\mathbf{v})\widetilde{A}_1(\mathbf{v}) 
     &= \hspace{-3mm} \sum_{\mathbf{v} \in \textup{ver}(B_J)} \textup{sgn}_{B_J}(\mathbf{v})\Pi_d(\mathbf{v}) - \textup{sgn}_{B_J}(\mathbf{u})(\Pi_d(\mathbf{u}) - S_d(\mathbf{u})) \\
     &= \prod_{j \in J} (1-u_j) \prod_{j \in [d] \setminus J} u_j - (-1)^{|J|}(\Pi_d(\mathbf{u}) - S_d(\mathbf{u})).
\end{align*}
If $|J|$ is even, it follows that
$$\sum_{\mathbf{v} \in \textup{ver}(B_J)} \textup{sgn}_{B_J}(\mathbf{v})\widetilde{A}_1(\mathbf{v}) 
= \prod_{j \in J} (1-u_j) \prod_{j \in [d] \setminus J} u_j - \Pi_d(\mathbf{u}) + S_d(\mathbf{u}) \ge 0$$
by the definition of $S_d$. If  $|J|$ is odd, we have 
\begin{align*}
\sum_{\mathbf{v} \in \textup{ver}(B_J)} \textup{sgn}_{B_J}(\mathbf{v})\widetilde{A}_1(\mathbf{v}) 
&= \prod_{j \in J} (1-u_j) \prod_{j \in [d] \setminus J} u_j + \Pi_d(\mathbf{u}) - S_d(\mathbf{u}) \\
&\ge \prod_{j \in J} (1-u_j) \prod_{j \in [d] \setminus J} u_j \ge 0,    
\end{align*}
since $S_d(\mathbf{u}) \le \Pi_d(\mathbf{u})$. It follows that $\widetilde{A}_1$ is a subcopula which we can extend to a copula $A_1$, defined on entire $[0,1]^d$, via (piecewise) multilinear extension on each rectangle $B_J$ (see \cite{Nels} for the definition of subcopulas and their extensions). Since $\widetilde{A}_1(\mathbf{v}) = \Pi_d(\mathbf{v})$ for every $\mathbf{v} \in \Delta$ lying on the boundary of $[0,1]^d$, the $(d-1)$-dimensional marginals of $A_1$ are equal to $\Pi_{d-1}$, so $A_1 \in \kc_d^{\Pi_{d-1}}$. Furthermore, $A_1(\mathbf{u}) = S_d(\mathbf{u})$ by construction. Finally, defining a function $\widetilde{A}_2 \colon \Delta \to [0,1]$ by 
$$\widetilde{A}_2(\mathbf{v}) = \begin{cases} T_d(\mathbf{u}); & \text{if } \mathbf{v} = \mathbf{u}, \\
\Pi_d(\mathbf{v}); & \text{otherwise,} 
\end{cases}$$
showing that it is a subcopula similarly as above, and extending it to a copula $A_2 \in \kc_d^{\Pi_{d-1}}$, satisfying $A_2(\mathbf{u}) = T_d(\mathbf{u})$, completes the proof.
\end{proof}

Note that in the case $d=2$ the bounds $S_2$ and $T_2 $ are equal to the Fr\'{e}chet-Hoeffding bounds $W_2$ and $M_2$. In the case $d=3$ the bounds in Proposition~\ref{prop:dinf} coincide with functions $S_3$ and $T_3$ given in \cite{NeUb12}. Furthermore, being pointwise infima and suprema of copulas, the functions $S_d$ and $T_d$ are quasi-copulas and, for $d \ge 3$, not copulas, as the following lemma shows. For $d=3$ this was already proved in \cite{NeUb12}.

\begin{lemma}
Let $d \geq 3$ and consider the rectangle $B = [\frac13, \frac23]^3 \times [0, \frac13]^{d-3}$ in $[0, 1]^d$. Then 
$$ \mu_{S_d}(B) = \mu_{T_d}(B) = - \tfrac{1}{3^{d-1}} < 0$$
holds, so both $S_d$ and $T_d$ are proper quasi-copulas.
\end{lemma}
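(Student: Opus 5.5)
The plan is to evaluate $\mu_{S_d}(B)$ and $\mu_{T_d}(B)$ directly from the definition \eqref{eq:mass.quasi}, as a signed sum over the $2^d$ vertices of $B$. Every vertex has the form $\mathbf{v}=(a,b,c,w_4,\ldots,w_d)$ with $a,b,c\in\{\frac13,\frac23\}$ and $w_j\in\{0,\frac13\}$.

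\textbf{Step 1 (vertices with a zero coordinate vanish).} Suppose $v_j=0$ for some $j\ge 4$. Then $\Pi_d(\mathbf{v})=0$. For $S_d$, take the even set $J=\emptyset$: it does not contain $j$, so its product contains the factor $u_j=0$. Since all products are nonnegative, the minimum is $0$, and hence $S_d(\mathbf{v})=0$. For $T_d$, take an odd set $J=\{i\}$ with $i\neq j$, which exists because $d\ge 2$. Its product again contains $u_j=0$, so $T_d(\mathbf{v})=0$. Consequently only the $8$ vertices with $w_4=\cdots=w_d=\frac13$ contribute. At these vertices the last $d-3$ coordinates sit at the upper endpoint of their intervals, so the sign is $(-1)^{3-k}$, where $k$ is the number of entries equal to $\frac23$ among $a,b,c$.

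\textbf{Step 2 (evaluate $S_d,T_d$ at the surviving vertices).} Every coordinate now lies in $\{\frac13,\frac23\}$. Each factor in $\prod_{j\in J}(1-u_j)\prod_{j\notin J}u_j$ is therefore $\frac13$ or $\frac23$.

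The product equals $3^{-d}$, with all factors equal to $\frac13$, exactly when $J$ is the set of coordinates equal to $\frac23$. This set has cardinality $k$.

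If the parity of $|J|$ is forced to differ from that of $k$, the minimum is $2\cdot 3^{-d}$: flip one coordinate. This is possible for either parity because $d\ge 3$.

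Using $\Pi_d(\mathbf{v})=2^k3^{-d}$, this gives the following values:
\begin{align*}
3^d S_d(\mathbf{v}) &= 2^k-1 \ (k \text{ even}),\qquad 2^k-2 \ (k \text{ odd}),\\
3^d T_d(\mathbf{v}) &= 2^k+2 \ (k \text{ even}),\qquad 2^k+1 \ (k \text{ odd}).
\end{align*}
Thus for $k=0,1,2,3$ we get $S_d$-values $0,0,3,6$ and $T_d$-values $3,3,6,9$, each times $3^{-d}$.

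\textbf{Step 3 (sum with signs).} The multiplicities are $1,3,3,1$ and the signs are $-,+,-,+$. Summing gives
$$3^d\mu_{S_d}(B)=-0+3\cdot 0-3\cdot 3+6=-3,\qquad 3^d\mu_{T_d}(B)=-3+3\cdot 3-3\cdot 6+9=-3.$$
Both masses therefore equal $-3^{1-d}<0$.

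\textbf{Conclusion.} A copula assigns nonnegative mass to every rectangle, so neither $S_d$ nor $T_d$ is a copula. Being pointwise infima and suprema of copulas by Proposition~\ref{prop:dinf}, they are quasi-copulas, and hence they are proper quasi-copulas.

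The only delicate point is the case analysis of the minima in Step 2. In particular, one must make sure that the parity constraint costs exactly one factor $\frac23$ and never more. This holds because flipping a single coordinate changes the parity of $|J|$ and changes exactly one factor. The rest is bookkeeping.
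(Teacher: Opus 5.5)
Your proof is correct and follows essentially the same route as the paper: vertices with a zero coordinate contribute nothing, the eight vertices with entries in $\{\frac13,\frac23\}$ are evaluated by noting that the parity constraint costs at most one factor $\frac23$, and the results are summed with signs $(-1)^{3-k}$. The only cosmetic difference is that the paper computes the signed volumes of $\Pi_d - S_d$ and $T_d - \Pi_d$ and then adds back $\mu_{\Pi_d}(B)=3^{-d}$, whereas you evaluate $S_d$ and $T_d$ at the vertices directly.
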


\begin{proof}
Consider
    \begin{align*}
        \widetilde{S}_d(\mathbf{u}) &:= \Pi_d(\mathbf{u}) - S_d(\mathbf{u}) = \min_{\substack{J \subseteq [d]\\ |J| \textup{ even}}} \Big\{\prod_{j \in J} (1-u_j) \prod_{j \in [d] \setminus J} u_j\Big\},\\
        \widetilde{T}_d(\mathbf{u}) &:= T_d(\mathbf{u}) - \Pi_d(\mathbf{u}) = \min_{\substack{J \subseteq [d]\\ |J| \textup{ odd}}} \Big\{\prod_{j \in J} (1-u_j) \prod_{j \in [d] \setminus J} u_j\Big\}.
    \end{align*}
and set
    \begin{align*}
        \mathbf{u}_3 &:= (\tfrac23, \tfrac23, \tfrac23, \tfrac13, \ldots, \tfrac13), \\
        \mathbf{u}_2 &:= (\tfrac23, \tfrac23, \tfrac13, \tfrac13, \ldots, \tfrac13), \\
        \mathbf{u}_1 &:= (\tfrac23, \tfrac13, \tfrac13, \tfrac13, \ldots, \tfrac13), \text{ and}\\
        \mathbf{u}_0 &:= (\tfrac13, \tfrac13, \tfrac13, \tfrac13, \ldots, \tfrac13)
    \end{align*}
Then all four points $\mathbf{u}_0,\ldots,\mathbf{u}_3$ are vertices of the 
rectangle $B$ and all other vertices of $B$ are either obtained by permuting the first three coordinates of these vertices or contain at least one coordinate equal to $0$. We claim that
$$ \widetilde{S}_d(\mathbf{u}_3) = \tfrac{2}{3^d}, \quad \widetilde{S}_d(\mathbf{u}_2) = \tfrac{1}{3^d}, \quad 
        \widetilde{S}_d(\mathbf{u}_1) = \tfrac{2}{3^d}, \quad  \text{and} \quad \widetilde{S}_d(\mathbf{u}_0) = \tfrac{1}{3^d}. $$
Indeed, this is straightforward for $d=3$. If $d > 3$, the minimum over sets $J \subseteq [d]$ of even cardinality
is always achieved by a set $J$ with $J \cap \{4, ..., d\} = \emptyset$, since including any coordinate from $\{4, ..., d\}$
in $J$ replaces a factor of $\tfrac13$ by $\tfrac23$, so strictly increases the product. So, the value of $\widetilde{S}_d$ at $\mathbf{u}_k$ is obtained by multiplying the value of $\widetilde{S}_3$ at corresponding $\mathbf{u}_k$ by $\tfrac{1}{3^{d-3}}$.
It follows that 
$$ \mu_{\widetilde{S}_d}(B) = \widetilde{S}_d(\mathbf{u}_3) - 3\widetilde{S}_d(\mathbf{u}_2) + 3\widetilde{S}_d(\mathbf{u}_1) - \widetilde{S}_d(\mathbf{u}_0) = \tfrac{4}{3^d},$$
since the remaining vertices of $B$ contain a zero coordinate, hence do not contribute to the
signed volume. This implies
$$ \mu_{S_d}(B) = \mu_{\Pi_d}(B) - \mu_{\widetilde{S}_d}(B) = \tfrac{1}{3^d} - \tfrac{4}{3^d} = -\tfrac{1}{3^{d-1}}.$$
Using similar arguments as above, we obtain
$$ \widetilde{T}_d(\mathbf{u}_3) = \tfrac{1}{3^d}, \quad \widetilde{T}_d(\mathbf{u}_2) = \tfrac{2}{3^d}, \quad 
        \widetilde{T}_d(\mathbf{u}_1) = \tfrac{1}{3^d}, \quad  \text{and} \quad \widetilde{T}_d(\mathbf{u}_0) = \tfrac{2}{3^d}, $$
so $\mu_{\widetilde{T}_d}(B) = -\tfrac{4}{3^d}$, and $\mu_{T_d}(B) = \mu_{\Pi_d}(B) + \mu_{\widetilde{T}_d}(B) = -\tfrac{1}{3^{d-1}}.$
\end{proof}

Now, we compute the value of $\Delta_{d_\infty}$ for every $d$.

\begin{theorem}\label{thm:maindinf}
For every $d \geq 2$ we have $\Delta_{d_\infty}=\frac{1}{2^d}$ and the supremum is attained.
\end{theorem}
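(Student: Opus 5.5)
By Proposition~\ref{prop:dinf}, for every $A \in \kc_d^{\Pi_{d-1}}$ and every $\mathbf{u}$ we have $|A(\mathbf{u}) - \Pi_d(\mathbf{u})| \le \max\{\widetilde{S}_d(\mathbf{u}), \widetilde{T}_d(\mathbf{u})\}$, where $\widetilde{S}_d=\Pi_d-S_d$ and $\widetilde{T}_d=T_d-\Pi_d$ are the minima over even and odd sets $J$ of the products
$$p_J(\mathbf{u}) := \prod_{j\in J}(1-u_j)\prod_{j\notin J}u_j .$$
Moreover, since the bounds are attained pointwise, $\Delta_{d_\infty} = \sup_{\mathbf{u}} \max\{\widetilde{S}_d(\mathbf{u}),\widetilde{T}_d(\mathbf{u})\}$, and the supremum over $A$ is attained as soon as the supremum over $\mathbf{u}$ is. So the theorem reduces to a finite-dimensional optimization: show that $\min_{|J| \text{ even}} p_J(\mathbf{u}) \le 2^{-d}$ and $\min_{|J| \text{ odd}} p_J(\mathbf{u}) \le 2^{-d}$ for all $\mathbf{u}\in[0,1]^d$, with equality at $\mathbf{u}=(\tfrac12,\ldots,\tfrac12)$.

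The equality case is immediate, since every $p_J(\tfrac12,\ldots,\tfrac12)=2^{-d}$. Hence $A_1$ (or $A_2$) from Proposition~\ref{prop:dinf} with $\mathbf{u}=(\tfrac12,\ldots,\tfrac12)$ satisfies $|A(\mathbf{u})-\Pi_d(\mathbf{u})|=2^{-d}$, and attainment follows.

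For the upper bound, the key identity I would use is
$$\sum_{J\subseteq[d]} p_J(\mathbf{u}) = \prod_{j}(u_j + (1-u_j)) = 1$$
together with
$$\sum_{J}(-1)^{|J|}p_J(\mathbf{u}) = \prod_j (2u_j-1).$$
Adding and subtracting, the sum over even $J$ equals $\tfrac12\bigl(1+\prod_j(2u_j-1)\bigr)$ and the sum over odd $J$ equals $\tfrac12\bigl(1-\prod_j(2u_j-1)\bigr)$. Each parity class has $2^{d-1}$ members, so the minimum over a class is at most its average:
$$\min_{|J|\text{ even}} p_J \le \frac{1+\prod_j(2u_j-1)}{2^d}, \qquad \min_{|J|\text{ odd}} p_J \le \frac{1-\prod_j(2u_j-1)}{2^d}.$$
This is not yet enough, since the right-hand side may exceed $2^{-d}$ when the product has the unfavorable sign. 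This is the main obstacle.

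To handle it, I would argue differently depending on the sign of $\prod_j(2u_j-1)$, and I expect a pairing argument to work. Pair each $J$ with $J\triangle\{k\}$ for a fixed coordinate $k$; this flips parity. Then
$$p_J + p_{J\triangle\{k\}} = \prod_{j\ne k}(\text{factors of }J),$$
so the problem reduces to a $(d-1)$-dimensional statement. Concretely, for the even class choose $k$ and write $\min_{|J|\text{ even}} p_J = \min_{J'\subseteq[d]\setminus\{k\}} q_{J'}\cdot c_{J'}$, where $c_{J'}\in\{u_k,1-u_k\}$ is fixed by the parity of $J'$. Then $\min \le \min(u_k,1-u_k)\cdot\min_{J'} \max(\ldots)$, which suggests induction on $d$ using the bound $\min(u_k,1-u_k)\le\tfrac12$. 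Cleaner still: take $J'$ minimizing $q_{J'}(\mathbf{u}_{-k})$ over all $J'\subseteq[d]\setminus\{k\}$. Then $q_{J'}\le 2^{-(d-1)}$, because the minimum over all $2^{d-1}$ products summing to $1$ is at most the average. Its parity then forces the factor $c_{J'}\le 1$, which gives only $2^{-(d-1)}$, so this alone loses a factor of $2$. To recover that factor, I would choose $k$ with $u_k$ closest to $\tfrac12$ and compare the two extensions $J'$ and $J'\cup\{k\}$ of a near-minimal $J'$, working with the two smallest products of opposite parity in dimension $d-1$. I would iterate this carefully, or alternatively verify directly that a coordinatewise minimizer pattern ($J=\{j: u_j>\tfrac12\}$, adjusted by flipping the coordinate closest to $\tfrac12$) gives $p_J \le \prod_{j\ne k}\min(u_j,1-u_j)\cdot\max(u_k,1-u_k)$. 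The remaining difficulty is bounding this quantity by $2^{-d}$, and I expect this step to require the most care.
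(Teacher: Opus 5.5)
There is a genuine gap. Your reduction to the pointwise problem via Proposition~\ref{prop:dinf} and the attainment at $(\tfrac12,\ldots,\tfrac12)$ are fine. But the inequality that carries the whole theorem is never established, namely $\min_{|J|\ \mathrm{even}} p_J(\mathbf{u})\le 2^{-d}$ and $\min_{|J|\ \mathrm{odd}} p_J(\mathbf{u})\le 2^{-d}$ for all $\mathbf{u}$. The arithmetic-mean bound fails, as you observe yourself. The pairing/induction sketch is not carried through: the step meant to recover the lost factor $2$ is only announced. Your final reduction stops explicitly at ``the remaining difficulty is bounding this quantity by $2^{-d}$''.

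The missing idea in the paper is to use the geometric rather than the arithmetic mean. The minimum of the $2^{d-1}$ numbers $p_J(\mathbf{u})$ with $|J|$ odd is at most $\bigl(\prod_{|J|\ \mathrm{odd}} p_J(\mathbf{u})\bigr)^{1/2^{d-1}}$. Each $j\in[d]$ lies in exactly $2^{d-2}$ of these sets and outside the other $2^{d-2}$. Hence the product equals $\prod_{j}\bigl(u_j(1-u_j)\bigr)^{2^{d-2}}$, and the geometric mean is $\sqrt{\prod_j u_j(1-u_j)}\le 2^{-d}$. The even class is handled identically.

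Your own last idea can also be closed in a few lines. Put $m_j=\min(u_j,1-u_j)\le\tfrac12$ and $J_0=\{j: u_j>\tfrac12\}$. Then $p_{J_0}(\mathbf{u})=\prod_j m_j\le 2^{-d}$, which settles the parity class containing $J_0$. For the other class, toggle the membership of an index $k$ with $m_k=\max_j m_j$. The resulting set $J$ has $p_J(\mathbf{u})=(1-m_k)\prod_{j\neq k} m_j\le m_k^{d-1}(1-m_k)\le 2^{-d}$. The last step holds because $t\mapsto t^{d-1}(1-t)$ has derivative $t^{d-2}(d-1-dt)\ge 0$ on $[0,\tfrac12]$ for $d\ge 2$ and equals $2^{-d}$ at $t=\tfrac12$. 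Either route fills the gap.
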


\begin{proof}
For every $\mathbf{u} \in [0,1]^d$, using inequality $\min\{a_1, a_2, \ldots, a_m\} \le \sqrt[m]{\prod_{i=1}^m a_i}$, we have   
\begin{align*}
    T_d(\mathbf{u}) - \Pi_d(\mathbf{u}) &= \min_{\substack{J \subseteq [d]\\ |J| \textup{ odd}}} \Big\{\prod_{j \in J} (1-u_j) \hspace{-2mm} \prod_{j \in [d] \setminus J} u_j\Big\} 
    \le \hspace{-2mm} \sqrt[2^{d-1}]{\prod_{\substack{J \subseteq [d]\\ |J| \textup{ odd}}} \hspace{-2mm} \Big(\prod_{j \in J} (1-u_j) \hspace{-2mm} \prod_{j \in [d] \setminus J} u_j\Big)}.
\end{align*}
Note that every $j \in [d]$ appears in precisely $2^{d-2}$ subsets $J \subseteq [d]$ of odd cardinality (and does not appear in the remaining $2^{d-2}$ subsets $J \subseteq [d]$ of odd cardinality). This implies
\begin{align*}
     T_d(\mathbf{u}) - \Pi_d(\mathbf{u})    &\le \sqrt[2^{d-1}]{\prod_{j \in [d]} (1-u_j)^{2^{d-2}} \prod_{j \in [d]} u_j^{2^{d-2}}} = \sqrt{\prod_{j \in [d]} (1-u_j)u_j} \le \tfrac{1}{2^d},
\end{align*}
since $(1-u_j)u_j \le \tfrac14$ for every $j$. Similarly, for every $\mathbf{u} \in [0,1]^d$
$$\Pi_d(\mathbf{u}) - S_d(\mathbf{u}) = \min_{\substack{J \subseteq [d]\\ |J| \textup{ even}}} \Big\{\prod_{j \in J} (1-u_j) \prod_{j \in [d] \setminus J} u_j\Big\} \le \tfrac{1}{2^d}.$$
It hence follows that
\begin{align*}
\Delta_{d_\infty} &= \sup\Big\{d_\infty(A,\Pi_d):\ A \in \kc_d^{\Pi_{d-1}}\Big\} \\
&= \sup\Big\{\sup\{|A(\mathbf{u}) - \Pi_d(\mathbf{u})|:\ \mathbf{u} \in [0,1]^d\}:\ A \in \kc_d^{\Pi_{d-1}}\Big\}  \\
&= \sup\Big\{\sup\big\{|A(\mathbf{u}) - \Pi_d(\mathbf{u})|:\ A \in \kc_d^{\Pi_{d-1}}\big\}:\ \mathbf{u} \in [0,1]^d\Big\}  \\
&= \sup\Big\{\max\{T_d(\mathbf{u}) - \Pi_d(\mathbf{u}), \Pi_d(\mathbf{u}) - S_d(\mathbf{u})\}:\ \mathbf{u} \in [0,1]^d\Big\}  \\
&\le \tfrac{1}{2^d}.
\end{align*}
On the other hand, let $\mathbf{u}_0 = (\tfrac12, \tfrac12, \ldots, \tfrac12) \in [0,1]^d$, to estimate
\begin{align*}
\Delta_{d_\infty} &= \sup\Big\{\max\{T_d(\mathbf{u}) - \Pi_d(\mathbf{u}), \Pi_d(\mathbf{u}) - S_d(\mathbf{u})\}:\ \mathbf{u} \in [0,1]^d\Big\}  \\
&\ge T_d(\mathbf{u}_0) - \Pi_d(\mathbf{u}_0) = \min_{\substack{J \subseteq [d]\\ |J| \textup{ odd}}} \Big\{\prod_{j \in J} (1-\tfrac12) \prod_{j \in [d] \setminus J} \tfrac12 \Big\} \\
&= \min_{\substack{J \subseteq [d]\\ |J| \textup{ odd}}} \{\tfrac{1}{2^d} \} = \tfrac{1}{2^d}.
\end{align*}
This shows that the supremum is attained and finishes the proof.
\end{proof}

\begin{figure}[ht]
\begin{center}
        \includegraphics[width=1\textwidth]{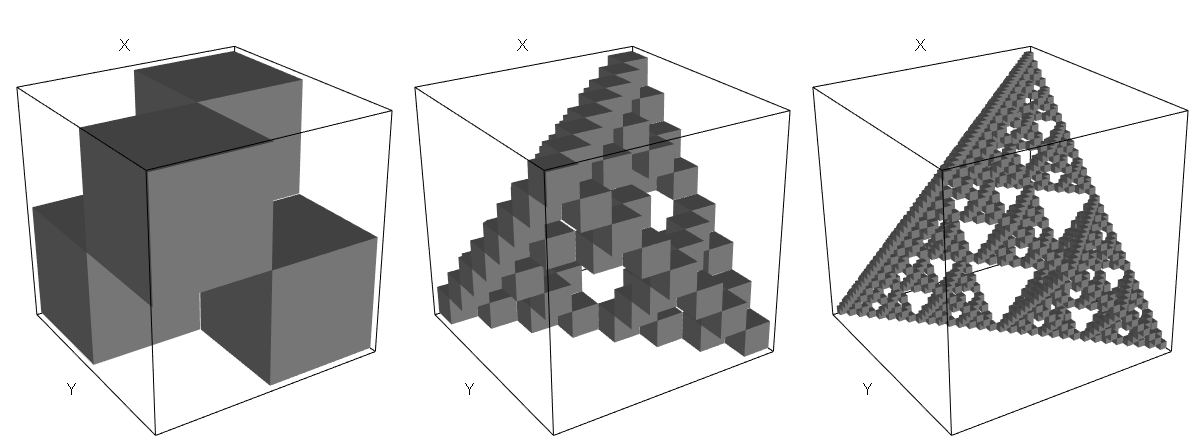}
\end{center}
    \caption{Steps 1, 3 and 5 in the IFSP-based construction of the 
    completely dependent copula $A \in \kc_3^{\Pi_2}$ fulfilling 
    $d_\infty(A,\Pi_3)=\frac{1}{2^3}=\Delta_{d_\infty}$ as well as 
    $D_1(A,\Pi_3)=\frac{1}{3}=\Delta_{D_1}$.}
    \label{fig2}
\end{figure}

\begin{example}
    Although the metrics $d_\infty$ and $D_1$ are conceptually quite different, 
    there are copulas $A \in \kc_d^{\Pi_{d-1}}$ fulfilling that they maximize both $\Delta_{d_\infty}$ and 
    $\Delta_{D_1}$. In fact, considering $d=3$ and working with Iterated Function Systems 
    with Probabilities (IFSP) induced by so-called generalized transformation 
    matrices, see \cite{DFT},
    it is possible to construct a copula $A \in \kc_3^{\Pi_2}$ which is 
    completely dependent and which fulfills $A(\frac{1}{2},\frac{1}{2},\frac{1}{2})=0$, implying $d_\infty(A,\Pi_3) \geq \vert 0-\frac{1}{2^3} \vert =\frac{1}{2^3}=\Delta_{d_\infty}$. Figure \ref{fig2} depicts the stepwise IFSP  
    approximation of support/density of this copula $A$. 
\end{example}

\begin{remark}
The conceptual difference of the metrics $d_\infty$ and $D_1$ shows itself
once again in the fact that 
$\Delta_{d_\infty}=\frac{1}{2^d} \rightarrow 0$ for $d \rightarrow \infty$ 
whereas $\Delta_{D_1}=\frac{1}{3}$ for every $d \geq 3$. Intuitively, 
with growing dimension $d$, the growing number of $(d-1)$-marginal constraints 
in $\kc_d^{\Pi_{d-1}}$ allows for less and less flexibility in the sense of pointwise deviation
from $\Pi_d$ (in fact, the flexibility reduces exponentially). 
By contrast, as mentioned in Section \ref{sec:2}, $D_1(A,\Pi_d)$ is maximal if and only if $A$ is completely dependent and the maximal value is given by 
$\frac{1}{3}$ for every $d \geq 3$. 
Since complete dependence is compatible with uniform $(d-1)$-marginals, 
$\kc_d^{\Pi_{d-1}}$ contains completely dependent elements and
the maximum possible $D_1$-distance to 
$\Pi_d$ does not decay with growing dimension $d$. 
\end{remark}

\section*{Acknowledgments}

Damjana Kokol Bukovšek and Nik Stopar acknowledge financial support from the ARIS (Slovenian Research and Innovation Agency, research core funding No. P1-0222 and projects J1-70034 and J1-50002). Wolfgang Trutschnig gratefully acknowledges the support 
of the WISS 2025 project ‘IDA-lab Salzburg’ (20204-WISS/225/197-2019 and 20102-F1901166 KZP).

\bibliographystyle{amsplain}
\bibliography{biblio}

\end{document}